 \newtheorem{propo}{Proposition}  
\let\paragraph\subsection
\title{Cartan's Magic Formula for Simplicial Complexes}
\author{Oliver Knill}
\date{11/25/2018}
\address{Department of Mathematics \\ Harvard University \\ Cambridge, MA, 02138 }
\subjclass{05E-xx,68R-xx,51P-xx}
\begin{document}
\maketitle

\begin{abstract}
\'Elie Cartan's magic formula $L_X = i_X d + d i_X = (d+i_X)^2=D_X^2$ relates
the exterior derivative $d$, an interior derivative $i_X$ and its Lie derivative $L_X$. 
We use this formula to define a finite dimensional vector 
space $\mathcal{X}$ of vector fields $X$ on a finite abstract simplicial complex $G$. 
The space $\mathcal{X}$ has a Lie algebra structure satisfying 
$L_{[X,Y]} = L_X L_Y - L_Y L_X$ as in the continuum. Any such vector field $X$ defines a 
coordinate change on the finite dimensional vector space
$l^2(G)$ which play the role of translations along the vector field.
If $i_X^2=0$, the relation $L_X=D_X^2$ with $D_X=i_X+d$ mirrors the Hodge factorization $L=D^2$, where
$D=d+d^*$ we can see $f_t =  - L_X f$ defining the flow of $X$ as the analogue of
the heat equation $f_t = - L f$ and view the Newton type equations $f_{tt} = -L_X f$ as the analogue of the
wave equation $f_{tt} = -L f$. Similarly as the wave equation is solved by 
$\psi(t)=e^{i Dt} \psi(0)$ with $\psi(t)=f(t)-i D^{-1} f_t(t)$, 
also any second order differential equation  $f_{tt} = -L_X f$ is solved by 
$\psi(t) = e^{i D_X t} \psi(0)$ in $l^2(G,\mathcal{C})$. If $X$ 
is supported on odd forms, the factorization property $L_X = D_X^2$ extends to 
the Lie algebra and $i_{[X,Y]}$ remains an inner derivative. 
If the kernel of $L_X: \Lambda^p \to \Lambda^p$ has dimension
$b_p(X)$, then the general Euler-Poincar\'e formula $\chi(G) = \sum_k (-1)^k b_k(X)$ holds for every
parameter field $X$. Extreme cases are $i_X=d^*$, where $b_k$ are the usual Betti numbers and $X=0$, 
where $b_k=f_k(G)$ are the components of the $f$-vector of the simplicial complex $G$. 
We also note that the McKean-Singer super-symmetry extends from $L$ to Lie derivatives. (It 
also holds for $L_X$ on Riemannian manifolds but appears to have been unnoticed there so far):
the non-zero spectrum of $L_X$ on even forms is the same than the non-zero spectrum of 
$L_X$ on odd forms. We also can make a deformation
$D_X' = [B_X,D_X]$ of $D_X=d+i_X + b_X, B_X=d_X-d_X^*+i b_X$ which produces a in general 
non-isospectral deformation of the
exterior derivative $d$ governed by the vector field $X$ featuring inflationary initial size
decay for $d$ typical for such systems, leading to an expansion of space. 
\end{abstract} 

\section{Introduction}

\paragraph{}
When formulating physics in a discrete geometric frame work, one is challenged by the absence 
of a continuous diffeomorphism group. What is the analogue of a vector field on a
finite abstract simplicial complex $G$? Discrete theory approaches like 
\cite{Forman1999,MarsdenDesbrun,RomeroSergeraert} have put forward some notions. 
What about a combinatorial discrete frame work \cite{AmazingWorld}?

\paragraph{}
In order to get a continuum motion, one always can just look at paths in the 
unitary group on the Hilbert space $l^2(G)$ 
like for example isospectral Lax deformations $D'=[B,D]$ of the exterior derivative $d$ 
defining $D=d+d^*$ \cite{IsospectralDirac,IsospectralDirac2}. But we would like to have a definition of 
vector field which is formally identical to the classical case and which agrees with 
the classical case if the differential complex comes from a Riemannian manifold. 

\paragraph{}
As in the continuum, like on a Riemannian manifold \cite{AMR,FrankelGeometryPhysics}, we would like to 
see vector fields related to $1$-forms, possibly moderated by a Riemannian metric,
The presence of an exterior derivative $d$
then produces potential fields $F=dV$ which then can be used to generate dynamics like
$x''=-\nabla V(x)$. In that case, the Riemannian structure allows to transfer the $1$-form $dV$ 
into a vector field $\nabla V$. In this note, we look at a
vector field notion in the discrete which works on any finite abstract simplicial complex $G$, 
a finite set of non-empty sets $x$ invariant under the process of taking non-empty finite subsets.

\paragraph{}
We see that there is a finite dimensional Lie algebra of vector fields $X$ 
for which the Lie derivative $L_X$ defines a coordinate change 
which commutes with exterior differentiation $d$. The coordinate changes allow to have 
a basic general covariance principle. The motion can be extended to a Hamiltonian frame work so
that we could look also at analogues the Kepler problem, where a mass point moves 
in a central field given by a potential associated to the geometry. Given a vector field $X$,
then the solution $\psi(t) = e^{i D_X t} \psi(0)$ of the second order and so 
a Newton type equation $f_{tt} = -L_X f$ with $L_X=D_X^2$  and 
$\psi(0)=f(0)-i D_X^{-1} f_t(0)$ resembles then the solution $e^{i D t} \psi(0)$ of the wave equation 
$f_{tt} = -L f$ with $L=D^2$. It is the Cartan magic formula \cite{CartanRiemannianGeometry}, which is 
now part of any differential topology book like \cite{AMR}) 
which produces the analogy between the wave equation with Hodge Laplacian $L=d d^* + d^* d = (d+d^*)^2=D^2$ 
and the Lie derivative $L_X = d i_X + i_X d = (d+i_X)^2 = D_X^2$. The {\bf Cartan formula} 
can therefore be seen as a key to port notions of ordinary differential equations on manifolds to 
discrete spaces like simplicial complexes. 

\paragraph{}
The Cartan formula has been used in the past in discrete frame works (it appears in \cite{MMPDTKMD}).
Usually, in the Newton case as well as in the wave case, one does not write the dynamics
using complex coordinates. Here in the discrete, it is convenient as the equations become just
Schr\"odinger equations giving paths in the unitary group. 
The wave equation case with Laplacian $L$ is the most
symmetric case, where the propagation of information happens in all directions or 
(if the momentum $f_t$ is chosen accordingly) allows to force the propagation into a specific direction. 
The analogue of ordinary differential equations are obtained when replacing $L$ with $L_X$, 
in which  $X$ is a deterministic field, which assigns to a simplex, just one smaller 
dimensional simplex.

\paragraph{}
The note draws also from insight gained in \cite{AnnieThesis,AnnieProject} and belongs 
to the theme of looking for finite dimensional analogues of partial differential equations in the continuum. 
The set-up for \cite{AnnieThesis} builds on work like \cite{Chapman} and is
an {\bf advection model} for a {\bf directed graph} $G$ is $u' = -Lu$, where
$L={\rm div} (V ({\rm grad}_0(u)))$ is the directed Laplacian on the graph. As in the usual (scalar) Laplacian 
$L=d^*d$ for undirected graphs which leads to the {\bf heat equation} $u' = -Lu$, the
{\bf advection Laplacian} uses difference operators:
the modification of the gradient ${\rm grad}_0$ which is the maximum of ${\rm grad}$ and $0$.
The divergence ${\rm div}=d^*$ as well as ${\rm grad}=d$ are defined by the usual
exterior derivative $d$ on the graph.
The {\bf consensus model} is the situation, where the graph $G$ is replaced by its
{\bf reverse graph} $G^T$, where all directions are reversed. One can therefore focus
on advection. A central part of \cite{AnnieThesis} relates this to Markov chains. If $L=D-A$, then
$M=A D^{-1}$ is a {\bf left stochastic matrix}, a Markov operator, which maps
probability vectors to probability vectors. The kernel of $L$ is related to the
fixed points of $M$. Assume $L D^{-1} u=0$, then $(D-A) D^{-1} u=0$ and $u=A D^{-1} u = Mu$.
Perron-Frobenius allows to study the structure of the equilibria which are given by the
kernel of $L$. This concludes the diversion into advection. 

\paragraph{}
What is the connection?
While the topic is related, we look here at differential equations on all differential
forms and not only on $0$-forms. Also, we don't yet really study the dynamics much and 
just establish the linear algebra set-up showing that 
there is an elementary way to define a Lie algebra of vector
fields in a discrete set-up. The affinity to the continuum is that the formalism is not only 
similar but identical to the continuum. Whatever is done works both for Riemannian manifolds
or finite abstract simplicial complexes or more generally for a differential complex. 
There are many open questions as seen at the end of this note:
integer-valued deterministic $X$ often produce integer eigenvalues of $L_X$ for example. We
would like to know when this is case appears. 

\paragraph{}
An other angle emerged while teaching the multi-variable Taylor theorem in \cite{Math22-2018}. 
Already the single variable Taylor theorem can be seen as the solution $f(t,x)=e^{D t} f(0)$ of the 
{\bf transport equation} $f_t = D f$ with $D=d/dx$, a partial differential equation. 
Because also $f(t,x)=f(x+t)$ solves this 
partial differential equation, we have $f(x+t)=e^{Dt} f(0)$ which becomes so the Taylor theorem, 
provided the initial function $f(0)$ is real analytic. 
For a multivariate function we can replace $L=D^2$ with a Lie derivative $L_X=D_X^2$ and in the case of a 
constant field $X=v$, a Taylor expansion $f(x+tv) = f(x) + df(x) tv + d^2f (x) (tv)^2/2 + ...$
(The multivariable Taylor theorem can be formulated conveniently using directional (Fr\'echet) 
derivatives which is how textbooks like \cite{Edwards1973,BlatterIII1974} treats the subject in higher
dimensions, avoiding tensor calculus.) 
As both $L$ and $L_X$ can be written as a square $L=D^2, L_X=D_X^2$, the analogy between the wave and Newton equation
has appeared. The frame work shows that a``diffeomorphism Lie group" exists in any geometric structure with 
an exterior derivative; Taylor links the vector field Lie algebra with translation. 

\section{From Cartan to d'Alembert} 

\paragraph{}
Given a smooth compact manifold $M$ or a simplicial complex $G$ with {\bf exterior derivative} 
$d: \Lambda^p \to \Lambda^{p+1}$, then every vector field $X$ defines an {\bf interior derivative} 
$i_X: \Lambda^{p} \to \Lambda^{p-1}$. The {\bf Cartan magic formula} writes the {\bf Lie derivative} 
$L_X$ as $L_X=d i_X + i_X d$. From the identities $d^2=0$ and $i_X^2=0$ follows that $L_X$ 
commutes with $d$. We know already from the continuum, that without the $d i_X$ part, the naive directional 
derivative $i_X d$ alone would not work, as it would be coordinate dependent. 
A $L_X$ commutes with $d$ it leads to a chain homotopy between the complexes before and after 
the coordinate transformation. Like the {\bf Hodge Laplacian} $L=d d^* + d^* d$, {\bf we can write
$L_X$ as a square}: define $D_X = d+i_X$ and $D=d+d^*$. Then, $L_X=D_X^2$ and $L=D^2$. 
The {\bf directional Dirac operator} $D_X$ has also an adjoint $D_X^*$ but it is different from $D_X$ 
in general. Despite the notation used, the directional Dirac operator is not the directional derivative
used in calculus. The operators $d,i_X,D_X$ and $L_X$ work on the linear space of all differential forms.

\paragraph{}
If $L=D^2$ is the Hodge Laplacian with Dirac operator $D=d+d^*$, then the {\bf wave equation} 
is $f_{tt} = -L f$. The {\bf directional wave equation} is the formal analogue
$f_{tt} = -L_X f$. Written in the d'Alembert form, it is $(\partial_{tt}+L_X) f=0$. 
As $L$ and $L_X$ are both squares of simpler operators $D$ and $D_X$, we can factor
$(\partial_t + i D_X) (\partial_t - i D_t) f =0$ or $(\partial_t + i D) (\partial_t - i D) f =0$. 
The solutions $e^{\pm i D_X}$ which with Euler's formula $e^{i Dt} = \cos(Dt) + i \sin(Dt)$ leads
to the explicit solution $f(t)=\cos(Dt) f(0) + \sin(Dt) D^{-1} f_t(0)$, where $D^{-1}$ is the 
pseudo-inverse is defined as $D^{-1} f_t^\perp(0)$ if $f_t^\perp(0)$ is in the orthogonal 
complement of the kernel of $D$. 

\paragraph{}
So far, the solutions of the equations were real-valued functions in the Hilbert space $H=l^2(G,\mathbb{R})$. 
If $G$ is a finite abstract simplicial complex, the Hilbert space is finite dimensional, and the frame work 
is part of linear algebra.
It is convenient to build the {\bf complex valued wave} $\psi(0) = f(0) - i D^{-1} f_t(0)$,
(where again $D^{-1}$ is the pseudo inverse) and get $\psi(t) = e^{i Dt} \psi(0)$. 
The dynamics is now the solution to the {\bf Schr\"odinger equation} $i\psi_t = -D \psi$, 
where $\psi(0)$ encodes the initial position $f(0)$ in its real part and the initial velocity $f_t(0)$ 
in its imaginary part. This works in the same way for $D_X$. In both cases, the wave equation for the real 
wave is equivalent to the Schr\"odinger equation for a complex wave. 
In the case of $0$-forms, we have $L=d^* d$ and $L_X=i_X d$ is the {\bf directional derivative} in the
direction $X$. We summarize: 

{\it Given a geometric space with an exterior derivative $d$, the second order 
real wave equation $f_{tt} = -L f$ is equivalent to the first order complex
Schr\"odinger equation $\psi' = i D \psi$, leading to a d'Alembert solution 
$\psi(t) = e^{i D t} \psi(0)$ which can then be computed using a Taylor expansion and for
which the real part of $\psi$ gives the wave solution $f(t)$. }

\section{The Lie algebra}

\paragraph{}
Let us assume now that we are in a finite dimensional geometric space with an exterior derivative 
$d: \Lambda^p \to \Lambda^{p+1}$ leading to a differential complex on a graded vector space 
$\Lambda=\oplus_{p=0}^{{\rm dim}(G)} \Lambda^p$. 
A {\bf vector field} is defined by a linear operator $i_X$ on $\Lambda$ which maps
$\Lambda^p$ to $\Lambda^{p-1}$ and has the property that $i_X^2 = 0$. We can define a Lie algebra
multiplication $Z=[X,Y]$ by first forming $L_X = i_X d + d i_X$ which is a map from  
$\Lambda^p$ to $\Lambda^p$ and then defining $Z$ through the inner derivative
$$   i_Z = i_{[X,Y]} = [ L_X,i_Y ] = L_X i_Y - i_Y L_X \; . $$

\paragraph{}
The field $Z$ can be read of from $i_Z$. We also have the {\bf Lie algebra relation} 
\begin{eqnarray*}
 L_Z  &=&  i_Z d + d i_Z = L_X i_Y d            - i_Y L_X d  + d L_X i_Y  - d i_Y          L_X \\
      &=&                  L_X (L_Y-d i_Y)      - i_Y L_X d  + L_X d i_Y  - (L_Y  - i_Y d) L_X \\
      &=&                  L_X L_Y - L_Y L_X   \; . 
\end{eqnarray*}

\paragraph{}
Now, if $i_X i_Y = i_Y i_X = 0$, then
$L_X i_Y - i_Y L_X = i_X L_Y - L_Y i_X$ because
inter changing $X$ and $Y$ produces a change of sign of $L_Z$.
As $L_Z = i_Z d + d i_Z$, also $i_Z$ changes sign
meaning $Z$ changes sign. So, $i_Z = L_X i_Y - i_Y L_X = - (L_Y i_X - i_X L_Y)$.  

\paragraph{}
These elementary matrix identities prove 
the following proposition which applies to any so derived Lie algebra of fields $X$ with $i_X^2=0$. 

\begin{propo}
Every vector field $X$ defines
an operator $D_X = d+i_X$ which has as a square a Lie derivative $L_X=D_X^2$. The set of $L_X$
define a Lie algebra with $L_{[X,Y]}=L_X L_Y - L_Y L_X$. If $X$ is in supported on odd forms,
then $i_X^2=0$ and $L_X=D_X^2$ holds in the entire Lie algebra. 
\end{propo}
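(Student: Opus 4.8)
The plan is to prove the three assertions in order, each reducing to the identities already collected in the excerpt. First, for the statement that $D_X = d + i_X$ squares to the Lie derivative: expand $D_X^2 = (d+i_X)^2 = d^2 + d\,i_X + i_X\,d + i_X^2$. Since $d^2 = 0$ by definition of an exterior derivative and $i_X^2 = 0$ by the definition of a vector field, this collapses to $d\,i_X + i_X\,d$, which is precisely the Cartan definition of $L_X$. So $L_X = D_X^2$ with no hypothesis on $X$ beyond $i_X^2=0$. This is purely formal and costs nothing.

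Second, for the Lie algebra structure: the bracket is \emph{defined} by $i_{[X,Y]} = [L_X, i_Y] = L_X i_Y - i_Y L_X$, so I must check (a) that the right-hand side is again an interior derivative, i.e.\ that it lowers degree by one and squares to zero, and (b) that $L_{[X,Y]} = L_X L_Y - L_Y L_X$. Part (b) is exactly the three-line display already carried out in the excerpt (using $L_X d = d L_X$, which follows from $d^2 = 0$, and the Cartan formula to rewrite $d\,i_Y$ and $i_Y\,d$), so I may simply invoke it. For (a), degree is immediate since $L_X$ preserves degree and $i_Y$ lowers it by one; the condition $i_{[X,Y]}^2 = 0$ is the subtle point. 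Note that the Jacobi identity for the bracket is a formal consequence of $L_{[X,Y]} = [L_X, L_Y]$ together with the fact that $X \mapsto L_X$ and $X \mapsto i_X$ are linear and that $i_Z$ determines $Z$, so antisymmetry and Jacobi on the level of the $L$'s transfer down; but the nilpotency $i_{[X,Y]}^2 = 0$ does not follow for free from these formal manipulations, and this is where the hypothesis has to enter.

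Third, and this is the main obstacle, I must show that if every $X$ in the Lie algebra is supported on odd forms (meaning $i_X$ annihilates $\Lambda^p$ for $p$ even, equivalently $i_X$ is nonzero only as a map $\Lambda^{2k+1} \to \Lambda^{2k}$) then $i_X^2 = 0$ automatically and, more importantly, the bracket stays within this class so that $L_{[X,Y]} = D_{[X,Y]}^2$ holds throughout. The point $i_X^2 = 0$ for odd-supported $X$ is easy: $i_X$ maps odd to even and even to zero, so $i_X^2$ maps everything to zero. The real work is showing $[X,Y]$ is again odd-supported, i.e.\ that $L_X i_Y - i_Y L_X$ vanishes on even forms. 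Here I would use that $L_X = d\,i_X + i_X\,d$ with $i_X$ odd-supported: on an even form $\omega \in \Lambda^{2k}$, $i_X\omega = 0$, so $L_X\omega = d\,i_X\omega + i_X d\omega = i_X d\omega$ (since $d\omega$ is odd, $i_X d\omega$ makes sense), so $L_X$ restricted to even forms is just $i_X d$. Then on an even form, $(L_X i_Y - i_Y L_X)\omega = L_X(i_Y\omega) - i_Y(i_X d\omega) = L_X(0) - i_Y i_X d\omega = -i_Y i_X d\omega$; since $d\omega$ is odd, $i_X d\omega$ is even, and $i_Y$ of an even form is zero, so this vanishes. Hence $i_{[X,Y]}$ kills even forms, $[X,Y]$ is odd-supported, and by the first part $L_{[X,Y]} = D_{[X,Y]}^2$. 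The same computation, read on odd forms, simultaneously pins down $i_{[X,Y]}^2 = 0$: being odd-supported forces it. I expect the only place needing care is bookkeeping of which degree each term lives in and confirming that the antisymmetry claim ($i_{[X,Y]} = -(L_Y i_X - i_X L_Y)$) from the excerpt is consistent with the odd-support computation, but this should fall out of the same degree analysis.
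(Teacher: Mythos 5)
Your argument is correct and follows the paper's own route: the same expansion of $(d+i_X)^2$ using $d^2=i_X^2=0$, and the same commutator computation for $L_{[X,Y]}$ via $d L_X = L_X d$ and the Cartan formula. The one place you go beyond the paper is the parity bookkeeping showing that $i_{[X,Y]}=L_X i_Y - i_Y L_X$ annihilates even forms when $X,Y$ are odd-supported (hence $i_{[X,Y]}^2=0$ and the factorization persists throughout the Lie algebra); the paper merely asserts this closure property, and your degree argument is the right justification for it.
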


\paragraph{}
Even so $L_X$ is not self-adjoint in general, it plays the 
role of a Laplacian. In the case when $L_X$ is not diagonalizable, we can not form the pseudo inverse
when writing the dynamics in the complex but we can assume that the initial velocity is 
in the image of $D_X$. With this definition, also the adjoint operator $d^*$ defines
a vector field. We can still see $d^*=i_X$ for some field $X$. It belongs to the class of vector fields 
$X$ for which the eigenvalues of $D_X$ are real. 

\paragraph{}
We could look at a subclass of ``deterministic fields", which have the property that for a $p$-form $f$, 
the $(p-1)$-form $i_X f$ is supported on a single sub-simplex $y$ of $x$, if $f$ is supported on a single 
simplex $x$. This would correspond to classical vector fields close to the discrete Morse theory frame-work
\cite{Forman1999}.  If $X$ is supported on one-dimensional simplices, this is close to the 
discrete Morse theory set-up. Unlike in the continuum, these ``deterministic fields"
are not invariant under addition, nor under the Lie algebra multiplication $[X,Y]$. 
When taking the commutator of $L_X$ and $L_Y$ for such fields, 
then the corresponding inner derivative $i_{[X,Y]}$ connects simplices which are not 
directly connected. 

\paragraph{}
If the simplicial complex is one-dimensional, or if $X$ is restrained to $p$-forms with odd $p$ (or even if we like)
then $i_Z=i_{[X,Y]}$ satisfies again $i_Z^2=0$ and the factorization $L_Z=(i_Z+d)^2= D_Z^2$ holds in the
entire Lie algebra. In general, the new interior derivative is only nilpotent, because
$i_Z^{1+{\rm dim}(G)} =0$. The Mathematica procedures below allow to support $X$ onto any subset of forms but
we mostly use the case when $X$ is supported on odd-dimensional forms. 

\paragraph{}
Let us briefly look at the $1$-dimensional (single-variable) classical case $M=\mathbb{R}$, where 
$L_X=i_X d$ is what we understand to be the usual derivative $d/dx$.
Technically, the exterior derivative $d$ produces from a $0$-form $f \in \Lambda^0$  
a $1$-form $df dx \in \Lambda^1$ which is in a different vector space than $f$. But for the constant 
vector field $X=1$, the combination $i_X d$ produces again an element in $\Lambda^0$. 
Since $i_X^*=0$ on $0$-forms, we have $i_X d = i_X d + d i_X = L_X$. Now 
$e^{L_X t} f = \sum_{n=0}^{\infty} (d/dx)^n f(x)X^n t^n/n!$ is by the {\bf Taylor formula} equal 
to $f(x+t X)$, illustrating that the derivative $d/dx$ is the generator of the translation. 
The flow $\phi_X f = f(x+t X)$ is a solution of a {\bf transport equation} and not the wave equation. 
To get an analogue of the later, we need a second derivative in time and so a symplectic or complex 
structure. It is first a bit puzzling to see the Lie derivative $L_X$ as a second order operator.
But the Cartan formula shows that also in one dimensions, $L_X = i_X d = (d + i_X)^2 = D_X^2$ is second
order. In calculus, we usually think of the derivative $d$ as a map on a space of {\bf scalar functions} and
not as a map from $0$-forms to $1$-forms. The inner derivative $i_X$ which brings us back to $0$-forms
is silently assumed in calculus. This identification of 0-forms and 1-forms can not be done in the 
discrete because the dimension $v_1=|E|$ of $1$-forms is different from the dimension
$v_0=|V|$ of $0$-forms on a graph $G=(V,E)$. Still, the general frame work applies and the 
wave equation $f_{tt} = - L_X f$ can be written as $(\partial_t - i D_X)  (\partial_t + i D_X)=0$
with $D_X = d + i_X$. 

\paragraph{}
We would like to point out that the McKean-Singer symmetry 
\cite{McKeanSinger} which holds for simplicial complexes and the 
operator $L=D^2$ \cite{knillmckeansinger} remains valid also for $L_X=D_X^2$:

\begin{propo}[McKean-Singer symmetry]
The non-zero spectrum of $L_X$ on even differential forms $\Lambda^{even}$ 
is the same than the non-zero spectrum of $L_X$ on odd
differential forms $\Lambda^{odd}$.
\end{propo}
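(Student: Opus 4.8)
The plan is to run the supersymmetric pairing argument of McKean and Singer, but with $D_X$ in place of $D=d+d^*$. The only structural inputs needed are that $D_X$ reverses the $\mathbb{Z}_2$-grading $\Lambda = \Lambda^{even} \oplus \Lambda^{odd}$ and that $L_X = D_X^2$. The first holds because $d\colon \Lambda^p \to \Lambda^{p+1}$ and $i_X\colon \Lambda^p \to \Lambda^{p-1}$ both flip the parity of the degree, so $D_X$ maps $\Lambda^{even}$ into $\Lambda^{odd}$ and $\Lambda^{odd}$ into $\Lambda^{even}$; the second is the Cartan factorization of the previous proposition, which holds here as an identity of operators on $\Lambda$ because $i_X^2=0$ (note that this is the only place $i_X^2=0$ enters: what is really used is just that $D_X$ is parity-reversing and $L_X=D_X^2$, which is why the symmetry is also available for $L_X$ on a Riemannian manifold).

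First I would write $D_X$ in block form with respect to the splitting $\Lambda^{even} \oplus \Lambda^{odd}$,
$$ D_X = \begin{pmatrix} 0 & A \\ B & 0 \end{pmatrix}, \qquad A\colon \Lambda^{odd} \to \Lambda^{even}, \quad B\colon \Lambda^{even} \to \Lambda^{odd}, $$
and then square to obtain
$$ L_X = D_X^2 = \begin{pmatrix} AB & 0 \\ 0 & BA \end{pmatrix}, $$
so that $L_X$ acts as $AB$ on $\Lambda^{even}$ and as $BA$ on $\Lambda^{odd}$. It therefore suffices to prove the purely linear-algebraic statement that for rectangular matrices $A$ and $B$ with $AB$ and $BA$ both defined (and square), $AB$ and $BA$ have the same non-zero eigenvalues with the same algebraic multiplicities.

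For that step I would argue as follows. If $\lambda \neq 0$ and $ABv = \lambda v$ with $v \neq 0$, then $Bv \neq 0$ (otherwise $\lambda v = ABv = 0$) and $BA(Bv) = B(ABv) = \lambda(Bv)$, so $B$ maps the $\lambda$-eigenspace of $AB$ injectively into the $\lambda$-eigenspace of $BA$; by symmetry $A$ does the reverse, giving equality of geometric multiplicities. To upgrade this to algebraic multiplicities one can either note that the same maps $A$, $B$ intertwine $(AB-\lambda)^k$ and $(BA-\lambda)^k$ and hence match the generalized eigenspaces, or invoke the standard characteristic-polynomial identity $t^{\dim\Lambda^{odd}}\det(t-AB) = t^{\dim\Lambda^{even}}\det(t-BA)$, which pins down the non-zero roots with multiplicity. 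Either way, the non-zero part of the spectrum of $L_X|_{\Lambda^{even}}$ equals the non-zero part of the spectrum of $L_X|_{\Lambda^{odd}}$.

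The main obstacle, and the reason the classical proof needs this small adjustment, is that $L_X$ is not self-adjoint in general, so ``spectrum'' must be understood as the multiset of eigenvalues counted with algebraic multiplicity (equivalently, the roots of the characteristic polynomial), and the diagonalizability enjoyed by $L=D^2$ is unavailable. This is exactly why I would phrase the $AB$ versus $BA$ comparison at the level of characteristic polynomials or generalized eigenspaces rather than merely eigenvectors. I would close with the remark that, as a byproduct, the supertrace $\mathrm{str}(e^{-tL_X}) = \mathrm{tr}(e^{-tL_X}|_{\Lambda^{even}}) - \mathrm{tr}(e^{-tL_X}|_{\Lambda^{odd}})$ becomes independent of $t$, recovering the general Euler--Poincar\'e identity $\chi(G)=\sum_k(-1)^k b_k(X)$ referenced in the abstract.
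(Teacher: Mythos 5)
Your proof is correct and follows essentially the same route as the paper: the parity-reversing operator $D_X$ with $L_X=D_X^2$ intertwines the even and odd blocks, which is exactly the supersymmetric pairing the paper invokes. Your extra care in passing from eigenvectors to algebraic multiplicities (via the $\det(t-AB)$ versus $\det(t-BA)$ identity), needed because $L_X$ is not self-adjoint and may fail to be diagonalizable, is a worthwhile sharpening of the paper's one-line argument but not a different method.
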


\begin{proof}
The proof is the same than in the continuum \cite{Cycon} 
or used in \cite{knillmckeansinger}: the operator $D_X$ which 
exchanges $\Lambda^{even}$ and $\Lambda^{odd}$ gives
a translation between the eigenvectors belonging to non-zero
eigenvalues. The discrepancy between the kernels on odd and
even forms is by definition the Euler characteristic. 
\end{proof} 

\paragraph{}
Also the nonlinear {\bf Lax type deformation} 
\cite{IsospectralDirac,IsospectralDirac2} 
of the Dirac operator generalizes from $D$ to $D_X$. These Lax equations are
$$   D_X' = [B_X,D_X]  $$ 
where $D_X=d+i_X + b_X, B_X=d_X-d_X^*+i b_X$. 
Unlike for $i_X=d^*$, where $L=L_X$ is the Hodge Laplacian, the deformation 
is now not isospectral in general and therefor not expected to be integrable. 
The expansion rate in different part of space or differential forms happens differently.
Still, these systems remain interesting non-linear differential equations 
and the corresponding $d(t)$ still satisfies $d(t)^2=0$ producing an exterior
derivative after deformation. As in the case of the wave equation, the deformed
$d(t)$ keeps the same cohomology.

\section{Physics}

\paragraph{}
Any mathematical theory with some quantum gravitational ambitions should be able to be powerful enough
to solve the Kepler problem effectively in any scale: in the large, it should lead to the 
classical Kepler problem, in the very large to relativistic motion in a Schwarzschild metric and 
in the very small to the quantum dynamics in the Hydrogen atom. No current theory passes this 
Kepler test: no theory can yet describe a point in the influence 
of a central field classically, relativistically and quantum mechanically, not just in principle 
or a perturbative patch work  but in an elegant manner, leading to quantitative results 
which match experiments in all three scales. It should be able to describe 
the motion of satellites or planets, also relativistically, predict the emission patters of 
gravitational waves emitted by a binary system or the structure of the Mendeleev table in the small. 

\paragraph{}
The general covariance principle in physics states that physical laws
are independent of the coordinate system. This means that the laws should not 
only be invariant under a finite dimensional symmetry group like Euclidean or Lorentz symmetry 
but they should be invariant under the diffeomorphism group of the manifold. 
Additionally, in case of fibre bundles, additional gauge symmetries might apply, but this
is also part of the general covariance principle. An example are the Maxwell equations $dF=0,
d^*F=j$ leading in the Coulomb gauge $d^* A=0$ to the Poisson equation $L A=j$. 
If we move into a new coordinate system, then the transported equations look the same. 
An other example are the Einstein field equations $G = e T$, relating the 
geometric Einstein tensor with the energy tensor $T$ using a proportionality factor $e$, the
Einstein constant. The covariance there is there the statement that $G$ and 
$T$ are tensors. How can one port the general covariance principle to the discrete? 
A naive request would be to look at laws only which are invariant after applying a deformation 
through a vector field. Since $L_X$ and $d$ commute, any law which only involves the
exterior derivative does this. Examples are the wave, the heat or the Schr\"odinger equation. 

\section{Examples}

\paragraph{}
The simplest case with a non-trivial Lie algebra is when 
$G=\{ \{1\},\{2\},\{1,2\}\}$ is the Whitney complex of the complete graph $K_2$. 
In that case, 
$$ d   = \left[ \begin{array}{ccc} 0 & 0 & 0 \\ 0 & 0 & 0 \\ -1 & 1 & 0 \\ \end{array} \right], 
   d^* = \left[ \begin{array}{ccc} 0 & 0 & -1 \\ 0 & 0 & 1 \\ 0 & 0 & 0 \\ \end{array} \right] \; . $$
The general inner derivative (vector field) has the form 
$$ i_X = \left[ \begin{array}{ccc} 0 & 0 & a \\ 0 & 0 & b \\ 0 & 0 & 0 \\ \end{array} \right] \; . $$
The general operator $D_X=d + i_X$ and $L_X=D_X^2= d i_X + i_X d$ then is 
$$ D_X = \left[ \begin{array}{ccc} 0 & 0 & a \\ 0 & 0 & b \\ -1 & 1 & 0 \\ \end{array} \right], 
   L_X = \left[ \begin{array}{ccc} -a & a & 0 \\ -b & b & 0 \\ 0 & 0 & b-a \\ \end{array} \right]  \; . $$
The eigenvalues of $L_X$ are $\{ 0, b-a, b-a \}$. Given an other vector field 
$$ i_Y=\left[ \begin{array}{ccc} 0 & 0 & u \\ 0 & 0 & v \\ 0 & 0 & 0 \\ \end{array} \right]  \; , $$
one can form $i_Z  = L_X i_Y-i_Y L_X = i_X L_Y - L_Y i_X$ which is 
$$ i_Z=\left[ \begin{array}{ccc} 0 & 0 & a v-b u \\ 0 & 0 & a v-b u \\ 0 & 0 & 0 \\ \end{array} \right] \; , $$
leading to 
$$ D_Z=\left[ \begin{array}{ccc} 0 & 0 & a v-b u \\ 0 & 0 & a v-b u \\ -1 & 1 & 0 \\ \end{array} \right], 
   L_Z=\left[ \begin{array}{ccc} b u-a v & a v-b u & 0 \\ b u-a v & a v-b u & 0 \\ 0 & 0 & 0 \\ \end{array} \right] \; .$$
This satisfies $L_Z = L_X L_Y - L_Y L_X$. The eigenvalues of $L_Z$ are all zero. Indeed $L_Z^2=0$. 

\paragraph{}
Here is the general case if $G=\{ \{1\},\{2\}, \{3\},\{1,2\},\{1,3\} \}$ is the Whitney 
complex of a linear graph of length $2$. 
$$ d = \left[    \begin{array}{ccccc}
                   0 & 0 & 0 & 0 & 0 \\
                   0 & 0 & 0 & 0 & 0 \\
                   0 & 0 & 0 & 0 & 0 \\
                   -1 & 1 & 0 & 0 & 0 \\
                   -1 & 0 & 1 & 0 & 0 \\
                  \end{array} \right], 
   d^* = \left[  \begin{array}{ccccc}
                   0 & 0 & 0 & -1 & -1 \\
                   0 & 0 & 0 & 1 & 0 \\
                   0 & 0 & 0 & 0 & 1 \\
                   0 & 0 & 0 & 0 & 0 \\
                   0 & 0 & 0 & 0 & 0 \\
                  \end{array} \right] \; . $$
With a vector fields 
$$   i_X = \left[
                  \begin{array}{ccccc}
                   0 & 0 & 0 & a_1 & a_2 \\
                   0 & 0 & 0 & a_3 & 0   \\
                   0 & 0 & 0 &  0  & a_6 \\
                   0 & 0 & 0 & 0 & 0 \\
                   0 & 0 & 0 & 0 & 0 \\
                  \end{array} \right],
     D_X = \left[
                  \begin{array}{ccccc}
                   0 & 0 & 0 & a_1 & a_2 \\
                   0 & 0 & 0 & a_3 &  0  \\
                   0 & 0 & 0 &  0  & a_6 \\
                  -1 & 1 & 0 & 0 & 0 \\
                  -1 & 0 & 1 & 0 & 0 \\
                  \end{array} \right] \; . $$
This leads to 
$$ L_X = \left[
                  \begin{array}{ccccc}
                   -a_1-a_2 & a_1 & a_2 & 0 & 0 \\
                   -a_3     & a_3 &  0  & 0 & 0 \\
                       -a_6 &   0 & a_6 & 0 & 0 \\
                   0 & 0 & 0 & a_3-a_1 &    -a_2 \\
                   0 & 0 & 0 &    -a_1 & a_6-a_2 \\
                  \end{array} \right] \; . $$
Given an other vector field
$$   i_Y = \left[
                  \begin{array}{ccccc}
                   0 & 0 & 0 & b_1 & b_2 \\
                   0 & 0 & 0 & b_3 &  0   \\
                   0 & 0 & 0 & 0   & b_6 \\
                   0 & 0 & 0 & 0 & 0 \\
                   0 & 0 & 0 & 0 & 0 \\
                  \end{array} \right],
     D_Y = \left[
                  \begin{array}{ccccc}
                   0 & 0 & 0 & b_1 & b_2 \\
                   0 & 0 & 0 & b_3 &  0  \\
                   0 & 0 & 0 &  0  & b_6 \\
                  -1 & 1 & 0 & 0 & 0 \\
                  -1 & 0 & 1 & 0 & 0 \\
                  \end{array} \right] \; , $$
we get 
\begin{tiny}
$$ i_Z = \left[ \begin{array}{ccccc}
 0 & 0 & 0 & -a_2 b_1-a_3 b_1+a_1 (b_2+b_3) & a_2 (b_1+b_6)-(a_1+a_6) b_2 \\
                   0 & 0 & 0 & a_1 b_3-a_3  b_1 & a_2 b_3-a_3 b_2 \\
                   0 & 0 & 0 & a_1 b_6-a_6 b_1 & a_2 b_6-a_6 b_2 \\
                   0 & 0 & 0 & 0 & 0 \\
                   0 & 0 & 0 & 0 & 0 \\
                  \end{array} \right] \; . $$
\end{tiny}
The eigenvalues of $L_X$ contain $0$ as well as the following two eigenvalues,
each with multiplicity two: \\ 
$\left(\pm \sqrt{a_1^2+2 a_1 (a_2-a_3+a_6)+(a_2+a_3-a_6)^2}-a_1-a_2+a_3+a_6\right)/2$. 
We see that real eigenvalues are quite common but that imaginary eigenvalues of 
$L_X$ can occur in the $4$-dimensional space of vector fields. The operator $L_Z$
does in general not have zero eigenvalues. They can even become complex. 
We also see that the commutator $L_Z$ now tunnels between places which were not
directly connected in $G$. 

\paragraph{}
For the complete complex $G=\{\{1\},\{2\},\{3\},\{1,2\},\{2,3\},\{1,3\},\{1,2,3\}\}$,
we can look at the general 
$$ D_X = d + iX = \left[ \begin{array}{ccccccc}
  0 & 0 & 0 & a & b & 0 & 0 \\
  0 & 0 & 0 & c & 0 & d & 0 \\
  0 & 0 & 0 & 0 & e & f & 0 \\
 -1 & 1 & 0 & 0 & 0 & 0 & g \\
 -1 & 0 & 1 & 0 & 0 & 0 & h \\
  0 & -1 & 1 & 0 & 0 & 0 & i \\
  0 & 0 & 0 & 1 & -1 & 1 & 0 \\
                 \end{array} \right] \; . $$
Now, given two general $D_X,D_Y$, we have $L_X=D_X^2, L_Y=D_Y^2$.
If $g=h=i=0$ then, $L_X i_Y-i_Y L_X = i_X L_Y-L_Y i_X$
and $i_Z=L_X i_Y-i_Y L_X$ has the property that $i_Z^2=0$. 

\paragraph{}
Let $G=\{\{1\},\{2\},\{3\},\{4\},\{1,2\},\{2,3\},\{3,4\},\{4,1\}\}$ 
be the complex of the cyclic graph $C_4$. The exterior derivative $d$ and an example of an 
interior derivative $i_X$ are:

\begin{tiny}
$$          d =  \left[
                 \begin{array}{cccccccc}
0 & 0 & 0 & 0 & 0 & 0 & 0 & 0 \\
 0 & 0 & 0 & 0 & 0 & 0 & 0 & 0 \\
 0 & 0 & 0 & 0 & 0 & 0 & 0 & 0 \\
 0 & 0 & 0 & 0 & 0 & 0 & 0 & 0 \\
 -1 & 1 & 0 & 0 & 0 & 0 & 0 & 0 \\
 0 & -1 & 1 & 0 & 0 & 0 & 0 & 0 \\
 0 & 0 & -1 & 1 & 0 & 0 & 0 & 0 \\
 1 & 0 & 0 & -1 & 0 & 0 & 0 & 0 \\
                 \end{array}
                 \right],
           i_X = \left[
                 \begin{array}{cccccccc}
 0 & 0 & 0 & 0 & -1 & 0 & 0 & 0 \\
 0 & 0 & 0 & 0 & 1 & 0 & 0 & 0 \\
 0 & 0 & 0 & 0 & 0 & 1 & 0 & 0 \\
 0 & 0 & 0 & 0 & 0 & 0 & 1 & 0 \\
 0 & 0 & 0 & 0 & 0 & 0 & 0 & 0 \\
 0 & 0 & 0 & 0 & 0 & 0 & 0 & 0 \\
 0 & 0 & 0 & 0 & 0 & 0 & 0 & 0 \\
 0 & 0 & 0 & 0 & 0 & 0 & 0 & 0 \\
                 \end{array}
                 \right] \; . $$
This leads to the Dirac operator $D=d+d^*$ and the directional Dirac operator
$D_X=d+i_X$
$$          D   = \left[
                  \begin{array}{cccccccc}
 0 & 0 & 0 & 0 & -1 & 0 & 0 & 1 \\
 0 & 0 & 0 & 0 & 1 & -1 & 0 & 0 \\
 0 & 0 & 0 & 0 & 0 & 1 & -1 & 0 \\
 0 & 0 & 0 & 0 & 0 & 0 & 1 & -1 \\
 -1 & 1 & 0 & 0 & 0 & 0 & 0 & 0 \\
 0 & -1 & 1 & 0 & 0 & 0 & 0 & 0 \\
 0 & 0 & -1 & 1 & 0 & 0 & 0 & 0 \\
 1 & 0 & 0 & -1 & 0 & 0 & 0 & 0 \\
                  \end{array}
                  \right],
            D_X = \left[
                  \begin{array}{cccccccc}
 0 & 0 & 0 & 0 & -1 & 0 & 0 & 0 \\
 0 & 0 & 0 & 0 & 1 & 0 & 0 & 0 \\
 0 & 0 & 0 & 0 & 0 & 1 & 0 & 0 \\
 0 & 0 & 0 & 0 & 0 & 0 & 1 & 0 \\
 -1 & 1 & 0 & 0 & 0 & 0 & 0 & 0 \\
 0 & -1 & 1 & 0 & 0 & 0 & 0 & 0 \\
 0 & 0 & -1 & 1 & 0 & 0 & 0 & 0 \\
 1 & 0 & 0 & -1 & 0 & 0 & 0 & 0 \\
                  \end{array}
                  \right]  \; . $$
The Hodge Laplacian $L=D^2$ and Lie derivative $L_X=D_X^2$ are 
$$  L = \left[
                  \begin{array}{cccccccc}
 2 & -1 & 0 & -1 & 0 & 0 & 0 & 0 \\
 -1 & 2 & -1 & 0 & 0 & 0 & 0 & 0 \\
 0 & -1 & 2 & -1 & 0 & 0 & 0 & 0 \\
 -1 & 0 & -1 & 2 & 0 & 0 & 0 & 0 \\
 0 & 0 & 0 & 0 & 2 & -1 & 0 & -1 \\
 0 & 0 & 0 & 0 & -1 & 2 & -1 & 0 \\
 0 & 0 & 0 & 0 & 0 & -1 & 2 & -1 \\
 0 & 0 & 0 & 0 & -1 & 0 & -1 & 2 \\
                  \end{array}
                  \right], 
    L_X =  \left[
                  \begin{array}{cccccccc}
 1 & -1 & 0 & 0 & 0 & 0 & 0 & 0 \\
 -1 & 1 & 0 & 0 & 0 & 0 & 0 & 0 \\
 0 & -1 & 1 & 0 & 0 & 0 & 0 & 0 \\
 0 & 0 & -1 & 1 & 0 & 0 & 0 & 0 \\
 0 & 0 & 0 & 0 & 2 & 0 & 0 & 0 \\
 0 & 0 & 0 & 0 & -1 & 1 & 0 & 0 \\
 0 & 0 & 0 & 0 & 0 & -1 & 1 & 0 \\
 0 & 0 & 0 & 0 & -1 & 0 & -1 & 0 \\
                  \end{array}
                  \right] \; . $$
\end{tiny}
The eigenvalues of $D$ are $\{-2, 2, -\sqrt{2}, -\sqrt{2}, \sqrt{2}, \sqrt{2}, 0, 0\}$, the eigenvalues
of $D_X$ are $\{ -\sqrt{2}, -\sqrt{2}, -1, -1, 1, 1, 0, 0 \}$. 
The eigenvalues of $L_X$ are $\{ 4, 4, 2, 2, 2, 2, 0, 0 \}$.

\begin{figure}[!htpb]
\scalebox{0.64}{\includegraphics{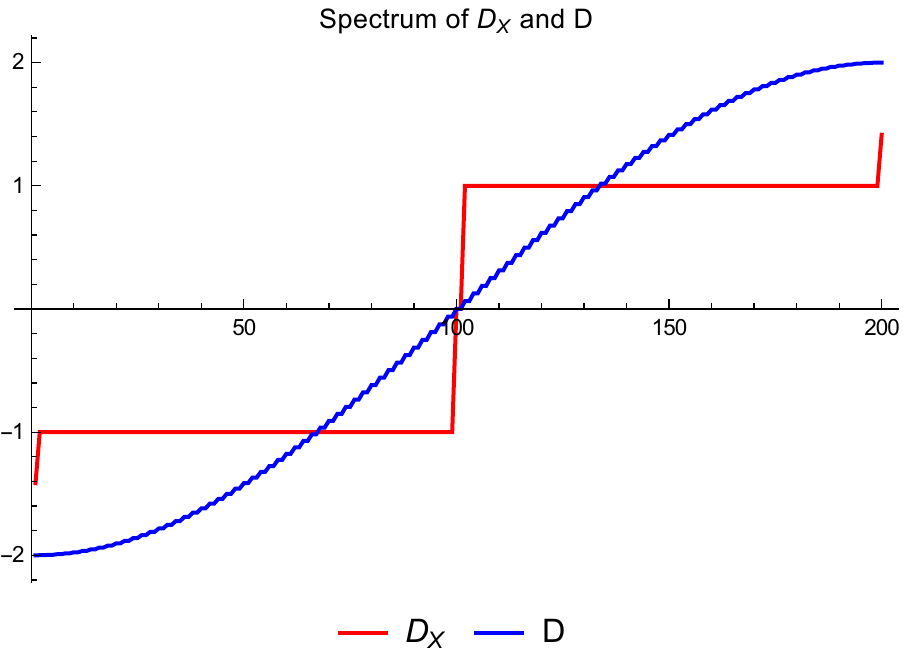}}
\scalebox{0.64}{\includegraphics{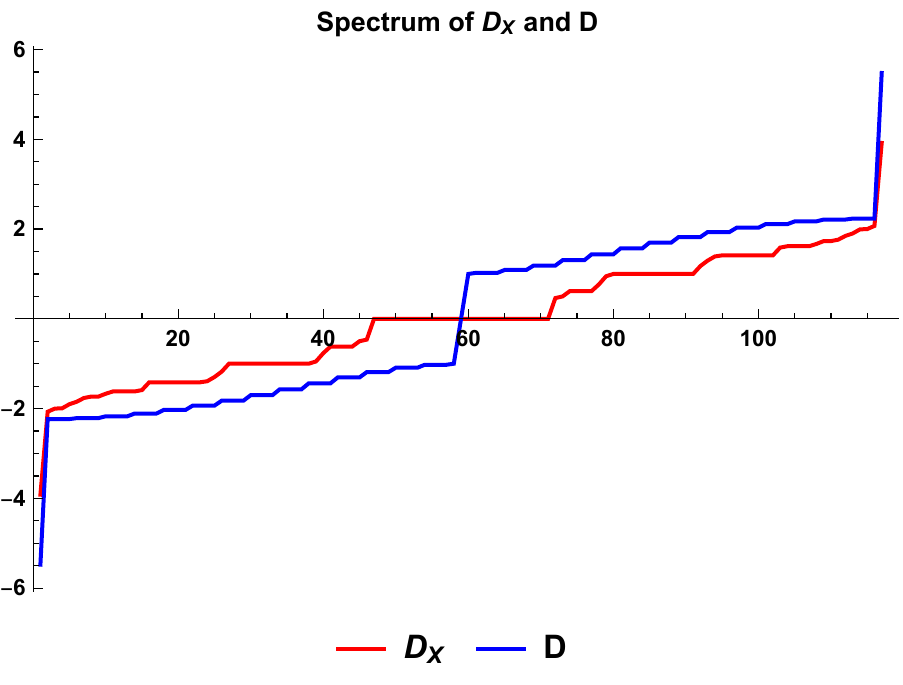}}
\label{example}
\caption{
The figures show the spectrum of $D$ and $D_X$ for a cycle graph spectrum and a wheel graph spectrum. 
In both cases, we start with $i_X=d^*$, then take away each entry $1$ or 
$-1$ with probability $1/2$. $D_X$ still has the $\sigma(D_X)=-\sigma(D_X)$ symmetry
from the Dirac operator $D$ but the energies of the particles are smaller as it is 
more difficult to travel. 
}
\end{figure}

\section{Mathematica procedures}

\paragraph{}
Here is the code which computes the Dirac operator $D$ and the
vector field analogue $D_X$ for any simplicial complex $G$. The 
code can be copy pasted when accessing the LaTeX source of this document
on the ArXiv. The first part computes the
matrices given in the above example. For the vector field, we chose
for $i_X$ just to take the first non-zero entry of $d^*$: 

\begin{tiny}
\lstset{language=Mathematica} \lstset{frameround=fttt}
\begin{lstlisting}[frame=single]
G={{1},{2},{3},{4},{1,2},{2,3},{3,4},{4,1}};  
n=Length[G];Dim=Map[Length,G]-1;f=Delete[BinCounts[Dim],1];
Orient[a_,b_]:=Module[{z,c,k=Length[a],l=Length[b]},
  If[SubsetQ[a,b] && (k==l+1),z=Complement[a,b][[1]];
  c=Prepend[b,z]; Signature[a]*Signature[c],0]];
d=Table[0,{n},{n}];d=Table[Orient[G[[i]],G[[j]]],{i,n},{j,n}];
dt=Transpose[d]; DD=d+dt; LL=DD.DD;
HX[x_]:=Block[{u=Flatten[Position[Abs[x],1]]},If[u=={},0,First[u]]];
iX=Table[0,{n},{n}];
Do[l=HX[dt[[k]]];If[l>0,iX[[k,l]]=dt[[k,l]]],{k,f[[1]]}];
DX=iX+d; LX = DX.DX;  
\end{lstlisting}
\end{tiny}

\paragraph{}
Here is the code to generate $D_X$ and $L_X$ for a generate random 
finite abstract simplicial complex $G$: 

\begin{tiny}
\lstset{language=Mathematica} \lstset{frameround=fttt}
\begin{lstlisting}[frame=single]
(* Generate a random simplicial complex                               *)
Generate[A_]:=Delete[Union[Sort[Flatten[Map[Subsets,A],1]]],1]
R[n_,m_]:=Module[{A={},X=Range[n],k},Do[k:=1+Random[Integer,n-1];
  A=Append[A,Union[RandomChoice[X,k]]],{m}];Generate[A]]; 
G=Sort[R[10,20]];

(* Computation of exterior derivative                                 *)
n=Length[G];Dim=Map[Length,G]-1;f=Delete[BinCounts[Dim],1];
Orient[a_,b_]:=Module[{z,c,k=Length[a],l=Length[b]},
  If[SubsetQ[a,b] && (k==l+1),z=Complement[a,b][[1]];
  c=Prepend[b,z]; Signature[a]*Signature[c],0]];
d=Table[0,{n},{n}];d=Table[Orient[G[[i]],G[[j]]],{i,n},{j,n}];
dt=Transpose[d]; DD=d+dt; LL=DD.DD;

(* Build interior derivatives iX  and iY                              *)
UseIntegers=False;
e={}; Do[If[Length[G[[k]]]==2,e=Append[e,k]],{k,n}]; 
BuildField[P_]:=Module[{X,ee,iX=Table[0,{n},{n}]},
X=Table[If[UseIntegers,Random[Integer,1],Random[]],{l,Length[e]}];
Do[ee=G[[e[[l]]]]; Do[If[SubsetQ[G[[k]],ee], 
  m=Position[G,Sort[Complement[G[[k]],Delete[ee,2]]]][[1,1]]; 
  iX[[m,k]]= If[MemberQ[P,Length[G[[m]]]],X[[l]],0]*
             Orient[G[[k]],G[[m]]]],{k,n}],{l,Length[e]}]; iX];

(* Build Laplacians LX,LY,LZ, plot spectrum of D and DX and matrices  *)
iX=BuildField[{1,3,5,7,9}]; iY=BuildField[{1,3,5,7,9}];
DX=iX+d; LX=DX.DX; DY=iY+d; LY=DY.DY; iZ1=LX.iY-iY.LX; iZ2=iX.LY-LY.iX; 
Print[iZ1==iZ2]; iZ=iZ1; LZ=Chop[iZ.d+d.iZ]; DZ=iZ+d;     
dx="\!\(\*SubscriptBox[\(D\), \(X\)]\)";
lx="\!\(\*SubscriptBox[\(L\), \(X\)]\)"; pl=PlotLabel;
GraphicsGrid[{{MatrixPlot[DX,pl->dx], MatrixPlot[LX,pl->lx]},
              {MatrixPlot[DD,pl->"D"], MatrixPlot[LL,pl->"L"]}}];
u1 = Sort[Chop[Eigenvalues[1.0 DX]]]; u2 = Sort[Eigenvalues[1.0 DD]];
u1=N[Round[u1*10^6]/10^6];            (* clear tiny imaginary parts  *)
S=ListPlot[{u1, u2},Joined ->True,PlotLegends ->Placed[{dx,"D"},Below],
 PlotRange -> All, PlotStyle -> {Red, Blue},
 PlotLabel -> "Spectrum of \!\(\*SubscriptBox[\(D\), \(X\)]\) and D"];

(* Compute Betti numbers, compare bosonic and fermionic part          *)
chi=Sum[-f[[k]](-1)^k,{k,Length[f]}]; f=Prepend[f,0]; m=Length[f]-1; 
U=Table[v=f[[k+1]];
   Table[u=Sum[f[[l]],{l,k}];LL[[u+i,u+j]],{i,v},{j,v}],{k,m}];
Cohomology = Map[NullSpace, U]; Betti = Map[Length, Cohomology]
chi1=Sum[-Betti[[k]](-1)^k,{k,Length[Betti]}];
EV=Map[Eigenvalues,U];
EVFermi=Table[EV[[2k]],{k,Floor[Length[EV]/2]}];
EVBoson=Table[EV[[2k-1]],{k,Floor[(Length[EV]+1)/2]}];
extract[u_]:=Module[{v=Flatten[u],w={}},
   Do[If[Abs[v[[k]]]>10^(-8),w=Append[w,v[[k]]]],{k,Length[v]}];Sort[w]];
extract[EVFermi]==extract[EVBoson] 

(* Now the same for LX                                               *)
U=Table[v=f[[k+1]];
   Table[u=Sum[f[[l]],{l,k}];LX[[u+i,u+j]],{i,v},{j,v}],{k,m}];
EV=Map[Eigenvalues,U];
EVFermi=Table[EV[[2k]],{k,Floor[Length[EV]/2]}];
EVBoson=Table[EV[[2k-1]],{k,Floor[(Length[EV]+1)/2]}];
extract[EVFermi]==extract[EVBoson] 
{EVFermi,EVBoson}
Total[Abs[N[extract[EVFermi] - extract[EVBoson]]]]

Cohomology = Map[NullSpace, U]; Betti = Map[Length, Cohomology];
chi2=Sum[-Betti[[k]](-1)^k,{k,Length[Betti]}];
{chi,chi1,chi2}
\end{lstlisting} \end{tiny}

\begin{figure}[!htpb]
\scalebox{1.0}{\includegraphics{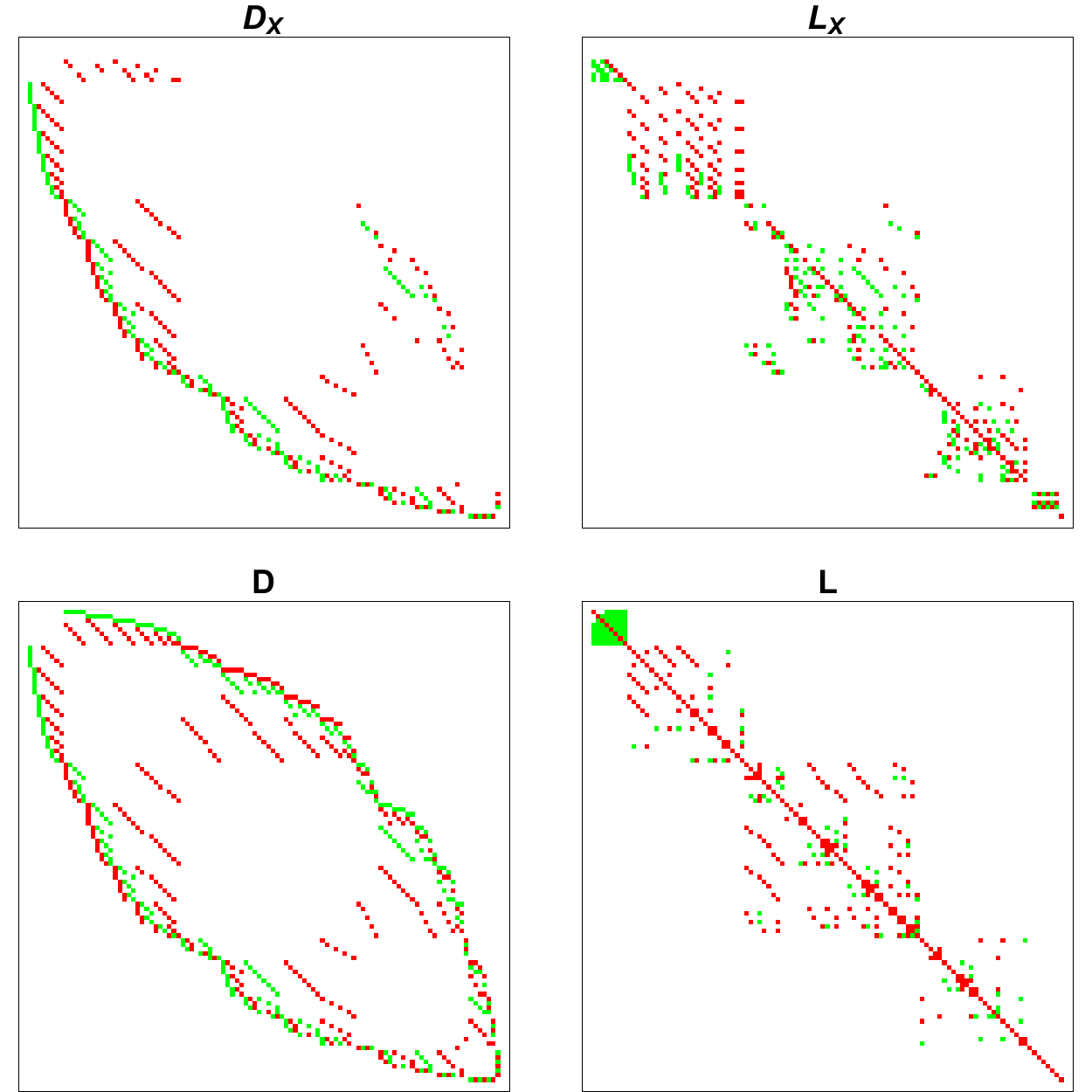}}
\label{example}
\caption{
The matrices $D_X,L_X,D,L$ in the case of a random complex. 
This was produced with the code above. 
}
\end{figure}

\paragraph{}
And finally, here is the self-contained procedure which does the isospectral deformation of 
the exterior derivative by deforming $D_X'=[B_X,D_X]$. In the case
$i_X=d^*$, this is the standard {\bf Lax isospectral deformation} we have seen 
before \cite{IsospectralDirac,IsospectralDirac2}. 
In an other extreme case, when $X=0$, then $d$ is not deformed at all. 
We still have the inflationary initial decay of $d$ typical for that
type of integrable dynamical system. The decay of $d$ means by
the Connes formula that there is an expansion of space because if 
the derivative operator become small, then the distances grow. 

\begin{tiny}
\lstset{language=Mathematica} \lstset{frameround=fttt}
\begin{lstlisting}[frame=single]
Generate[A_]:=Delete[Union[Sort[Flatten[Map[Subsets,A],1]]],1]
R[n_,m_]:=Module[{A={},X=Range[n],k},Do[k:=1+Random[Integer,n-1];
  A=Append[A,Union[RandomChoice[X,k]]],{m}];Generate[A]];G=Sort[R[5,8]];
n=Length[G]; fv=Delete[BinCounts[Map[Length,G]],1];
cn=Length[fv];br={0};Do[br=Append[br,Last[br]+fv[[k]]],{k,cn}];

Orient[a_,b_]:=Module[{z,c,k=Length[a],l=Length[b]},
  If[SubsetQ[a,b] && (k==l+1),z=Complement[a,b][[1]];
  c=Prepend[b,z]; Signature[a]*Signature[c],0]];
d=Table[0,{n},{n}];d=Table[Orient[G[[i]],G[[j]]],{i,n},{j,n}];
dt=Transpose[d]; DD=d+dt; LL=DD.DD; 

UseIntegers=False;e={}; Do[If[Length[G[[k]]]==2,e=Append[e,k]],{k,n}];
BuildField[P_]:=Module[{X,ee,iX=Table[0,{n},{n}]},
X=Table[If[UseIntegers,Random[Integer,1],Random[]],{l,Length[e]}];
Do[ee=G[[e[[l]]]]; Do[If[SubsetQ[G[[k]],ee],
  m=Position[G,Sort[Complement[G[[k]],Delete[ee,2]]]][[1,1]];
  iX[[m,k]]= If[MemberQ[P,Length[G[[m]]]],X[[l]],0]*
             Orient[G[[k]],G[[m]]]],{k,n}],{l,Length[e]}]; iX];
iX=BuildField[{1,3,5,7,9}]; iY=BuildField[{1,3,5,7,9}];
DX=iX+d; LX=DX.DX; DY=iY+d; LY=DY.DY; iZ=LX.iY-iY.LX; DZ=iZ+d;LZ=DZ.DZ;

T[A_]:=Module[{n=Length[A]},Table[If[i<=j,0,A[[i,j]]],{i,n},{j,n}]];
UT[{DD_,br_}]:=Module[{D1=T[DD]},     (* Lower triangular block  *)
Do[Do[Do[D1[[br[[k]]+i,br[[k]]+j]]=0,{i,br[[k+1]]-br[[k]]}],
 {j,br[[k+1]]-br[[k]]}],{k,Length[br]-1}];D1];
RuKu[f_,x_,s_]:=Module[{a,b,c,u,v,w,q},u=s*f[x]; (* Runge Kutta  *)
a=x+u/2;v=s*f[a];b=x+v/2;w=s*f[b];c=x+w; q=s*f[c];x+(u+2v+2w+q)/6];

DD=DX; d0=UT[{DD,br}]; e0=Conjugate[Transpose[d0]];
M=1000; delta=2/M; u={};  (* Deformation with Runge Kutta      *)
Do[d=UT[{DD,br}];e=Conjugate[Transpose[d]];
BB=d-e; CC=d+e; MM=CC.CC; b=DD-CC; VV=b.b;
B=BB+1.0*I*b; f[x_]:=B.x-x.B; DD=RuKu[f,1.0 DD,delta];
u=Append[u,Total[Abs[Flatten[Chop[d]]]]],{m,M}];
DDX=DD; LLX=DDX.DDX;

{Total[Abs[Flatten[Chop[d.d]]]], Total[Abs[Flatten[Chop[e.e]]]]}

F[x_]:=If[x==0,0,-Log[Abs[x]]];  (* Plot the size of d      *)
v=M*Table[F[u[[k+1]]]-F[u[[k]]],{k,Length[u]-1}];
ListPlot[v,PlotRange->All]
\end{lstlisting} \end{tiny}

\begin{figure}[!htpb]
\scalebox{1.0}{\includegraphics{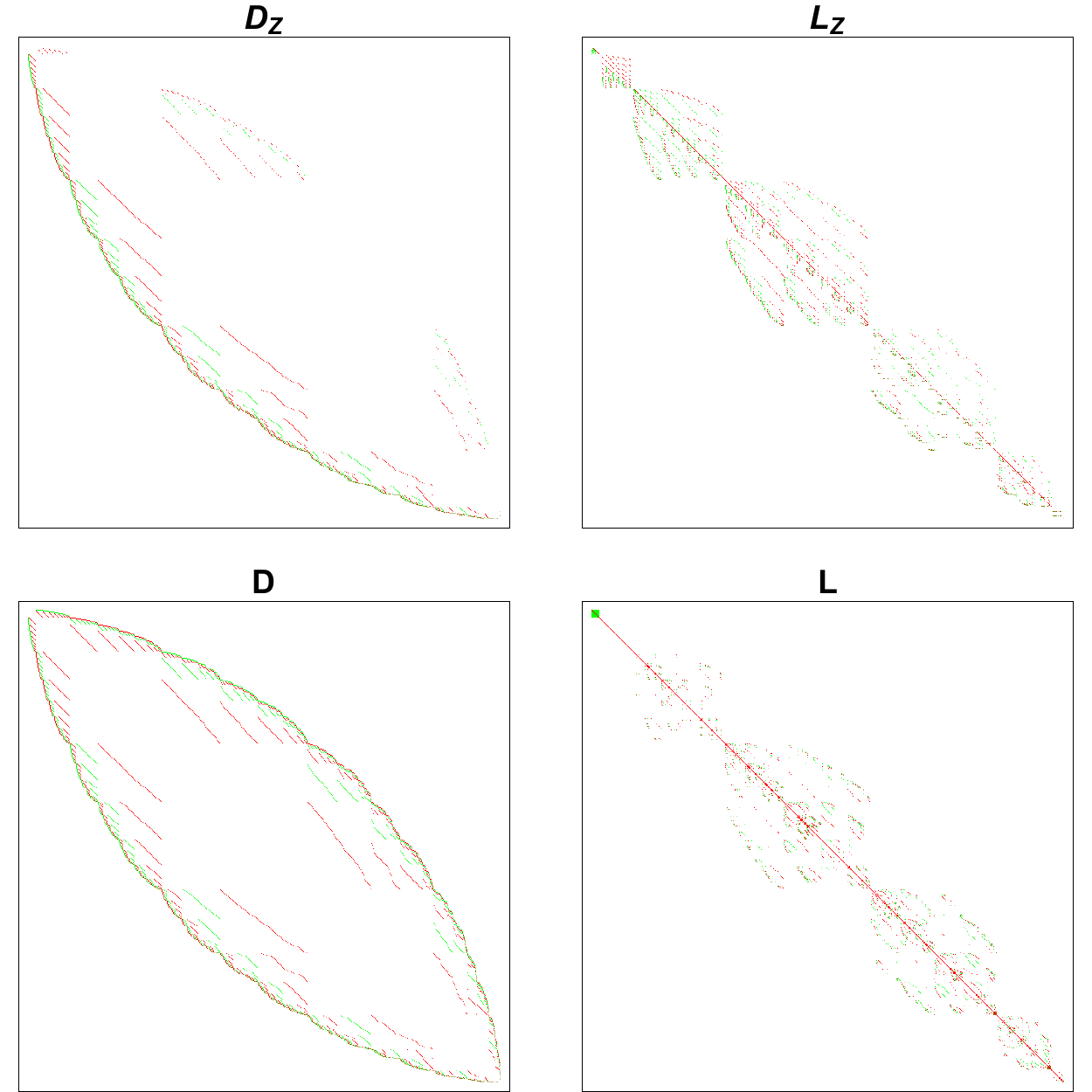}}
\label{example}
\caption{
The matrices $D_Z,L_Z,D,L$ in the case of a random complex
with $f$-vector $(10, 45, 120, 192, 165, 73, 15, 1)$. 
Also this was produced with the code above, where $Z=[X,Y]$
is the commutator of two random vector fields. 
}
\end{figure}

\section{Questions}

\paragraph{}
The operators $D_X$ and $L_X$ are not symmetric in general so that complex eigenvalues can appear.
In that case, the solution $\psi(t) = e^{i D_X t} \psi(0)$ can grow exponentially. 
$D_X$ still often has real eigenvalues, leading to quasi-periodic solutions as the orbits $e^{i D_X t} \psi(0)$ form 
a subgroup of a finite dimensional torus, if the graph is finite.  Actually, if $i_X(k,l) \neq 0$ only for
one $l$, then we implement a deterministic vector field. In that case the eigenvalues of $L_X$ often
non-negative integers taking values in $\{0,1, \dots, {\rm dim}(G)+1 \}$. We would like to understand
the spectrum. 

\paragraph{}
For $D=i_X + i_X^* + d + d^*$ we have $D^2 = L_X + L_X^* + L$. Now, if we average that over all 
possible vector fields using a measure which is homogeneous, we expect the $L_X$ to
average out and get the wave equation governed by $L$. Can one make this more precise and see
the wave equation $f_{tt} = - L$ as an average of deterministic flows $f_{tt} = - L_X$? 

\paragraph{}
We often integer eigenvalues of $L_X$ if $i_{X}$ has integer values. In small dimensional examples,
we can compute general formulas for the eigenvalues but integer eigenvalues also often appear for
large random simplicial complexes. Under which conditions does $L_X$ have integer eigenvalues? 

\paragraph{}
The eigenvalues of $L_X$ are most of the time real if the entries of $I_X$ are non-negative 
multiplies of $d^*$. They can become imaginary in general, if the signs are changed. 
Can we find conditions which assures a real spectrum? 

\bibliographystyle{plain}

\end{document}